\definecolor{violet}{rgb}{0.0,0.2,0.7}
\definecolor{rouge2}{rgb}{0.8,0.0,0.2}
\setlist[enumerate]{leftmargin=*}
\let\originallhook\lhook
\let\lhook\originallhook
\renewcommand{\P}{\mathbb{P}}
\newcommand{\R}{\mathbb{R}}
\newcommand{\CC}{\mathbb{C}}
\newcommand{\D}{\mathbb D}
\newcommand{\vp}{\varphi}
\renewcommand{\O}{\mathcal{O}}
\renewcommand{\le}{\leqslant}
\newcommand{\Ric}{\mathrm{Ric} \,}
\newcommand{\om}{\omega}
\newcommand{\omvp}{\omega_{\varphi}}
\newcommand{\omb}{\omega_{\beta}}
\newcommand{\xreg}{X_{\rm reg}}
\renewcommand{\D}{\mathbb D}
\newcommand{\bom}{\bar \om}
\newcommand{\Supp}{\mathrm {Supp}}
\newcommand{\ddc}{dd^c}
\newcommand{\Sp}{\mathrm{Spec}}
\newcommand{\xdr}{(X,D)_{\rm reg}}
 \newcommand{\longhookrightarrow}{\ensuremath{\lhook\joinrel\relbar\joinrel\rightarrow}}
\newtheorem*{mthm}{Main Theorem}
\numberwithin{equation}{section}
\begin{document}

\frontmatter 

\title[KE metrics with conic singularities along self-intersecting divisors]{Kähler-Einstein metrics with conic singularities along self-intersecting divisors}

\date{\today}
\author{Henri Guenancia}
\address{Department of Mathematics \\
Stony Brook University, Stony Brook, NY 11794-3651}
\email{guenancia@math.sunysb.edu}
\urladdr{www.math.sunysb.edu/{~}guenancia}


\begin{abstract}
In this paper, we extend the existence and regularity theorems for Kähler-Einstein metrics having conic singularities along a simple normal crossing divisor to the case of normal crossing divisor, i.e. when components of the divisor are allowed to intersect themselves transversely.
\end{abstract}

\maketitle
\mainmatter

\section*{Introduction}

Let $X$ be a complex Kähler manifold. A divisor $D$ (formal sum of hypersurfaces) is said to have normal crossing support if near any point in its support, $\Supp(D)$ is given by $(z_1 \,\cdots \, z_d=0)$ for some holomorphic system of coordinates $(z_i)$. 

With a given $\R$-divisor $D= \sum (1-\beta_k)D_k$ with normal crossing support such that $\beta_k\in (0,1)$ for all $k$, we can associate the notion of Kähler metric with \textit{conic singularities} along $D$. For our purposes in this paper, such 
an object is a Kähler metric $\om$ on $X\setminus(\cup D_k)$ which is quasi-isometric to the model metric with conic singularities: more precisely, near each point $p\in \Supp(D)$ where $\Supp(D)$ is defined by the equation $(z_1 \, \cdots \, z_d=0)$, we ask $\om$ to satisfy:
\[C^{-1} \om_{\rm cone} \le \om \le C \om_{\rm cone}\]
for some constant $C>0$, and where 
\[\om_{\rm cone}:=\sum_{k=1}^d \frac{1}{|z_k|^{2(1-\beta_k)}}i dz_k\wedge d\bar z_k +\sum_{k=d+1}^n i dz_k\wedge d\bar z_k \] 
is the model cone metric with cone angles $2\pi \beta_k$ along $(z_k=0)$.

Given a pair $(X,D)$ as above, a natural question to ask is whether one can find a \textit{Kähler-Einstein} metric $\om$ on $X\setminus \Supp(D)$ (i.e. satisfying $\Ric \om = \mu \om$ for some $\mu \in \R$  on this open subset) having conic singularities along $D$. Such a metric will be referred to as a \textit{conic Kähler-Einstein metric}.\\

This question has been studied a lot recently, and gave rise to a number of works such as \cite{Don}, \cite{Brendle}, \cite{CGP}, \cite{JMR} or \cite{GP}. In these papers, the involved divisor (or, more precisely, its support) was always assumed to be smooth or merely a simple normal crossing divisor. This last notion is a convenient and current strengthening of the notion of normal crossing divisor where one also requires the components of the divisor to be smooth (typically one excludes self-intersecting divisors, even if the crossing is transverse). This notion is much widespread in algebraic geometry, particularly because of Hironaka's desingularization theorem, which produces such objects out of arbitrary singular ones. 

In any case, it is surprising that constructing Kähler-Einstein metrics with conic singularity along a given divisor (a notion that is local in the analytic topology) has so far always required working with simple normal crossing divisors (a notion which is local only in the Zariski topology). The reason for that is the difficulty to construct global Kähler metrics having conic singularities along a self-intersecting divisor, as we point it out in more details in \S \ref{sec:prev}. \\

In this paper, we investigate the question when $X$ is a projective manifold, and more precisely we prove:

\begin{mthm}
Let $X$ be a complex projective manifold of dimension $n$, $D= \sum (1-\beta_k)D_k$ a divisor with normal crossing support such that $\beta_k\in (0,1)$ for all $k$, and let $\mu \in \R$. Let $\om$ be any closed positive $(1,1)$ current with bounded potentials satisfying:
$$\Ric \om = \mu \om + [D]$$
Then $\om$ has conic singularities along $D$, i.e. $\om$ is a Kähler-Einstein conic metric. 
\end{mthm}

So this theorem is a regularity theorem: starting with a current whose potentials are bounded but satisfies a Kähler-Einstein equation, we deduce that the current is actually smooth outside $D$ and is quasi-isometric to the model cone metric near the divisor. As for the question of existence, it has been essentially settled in the work of Ko\l odziej \cite{Kolo} (at least when $\mu$ is nonpositive) and in \cite{rber, BBEGZ} for the case where $\mu$ is positive, given suitable properness assumptions.\\

Let us try now to outline the new difficulty compared to the snc case. In order to reduce the problem to the snc case, one would be tempted to blow up the self-intersection locus of the divisor. This will indeed make the divisor into an snc one, but the pulled-back metric will then live in a non Kähler class, and this is still a major and unsolved problem to get regularity properties for degenerate Monge-Ampère equations near the non-Kähler locus of the background cohomology class. So instead of performing a blow-up, which would kill ampleness, we want to resolve the singularities using a \textit{finite} morphism.  This turns out to always be possible locally, but when one wants to do it globally on $X$, one necessarily puts some ramification and singularities into the game (i.e. the finite morphism $f:Y\to X$ that we are looking for will be ramified and involve a \textit{singular} variety $Y$). 

Still we are able to address these issues, relying on the main result of \cite{G2} (generalized in \cite{GP} to arbitrary angles $\beta\in (0,1)$) asserting that a weak Kähler-Einstein metric for a klt pair has conic singularities on the simple normal crossing locus of the pair. We will proceed in four steps:

\begin{enumerate}[label= \textit{Step \arabic*.}]
\item Using the theorem above, reduce the question to a local one, near points $x\in X$ where components of the divisors intersect themselves.
\item Find an étale neighborhood $U\to X$ of $x$ where $D$ has simple normal crossing support (this uses Artin's approximation theorem)
\item Compactify $U$ to get a \textit{finite} map $f:Y\to X$ extending $U\to X$.
\item Identify $f^*\om$ to the Kähler-Einstein metric of a klt pair $(Y, \Delta)$ whose snc locus contains $U$ and apply \cite{G2} to conclude.\\
\end{enumerate}

Finally, in the last section \S \ref{sec:nodal} we make explicit the construction of the above map $f:Y\to X$ when $X=\P^2$ and $D=(y^2=x^2(x+1))$ is the nodal cubic. It turns out that we can choose $Y=\P^2$ and $f:\P^2 \to \P^2$, a degree 9 morphism that extends the normalization $\P^1 \to D$.\\

\noindent
\textbf{Acknowledgments. } I am grateful to Chengjian Yao who brought this question to my attention and with whom I had very productive discussions. I would also like to warmly thank Olivier Benoist for his very insightful and helpful suggestions.

\section{Normal crossing vs simple normal crossing divisors}

Let us start with an example, or better, two examples. 
The first example is given by $\CC^2$ endowed with the hypersurface $H=(xy=0)$, where $x,y$ are the standard holomorphic coordinates on $\CC^2$. As a divisor, $H$ can be decomposed as the sum $(x=0)+(y=0)$ and is the typical example of what is called a simple normal crossing (snc for short) divisor. 

Now, still in $\CC^2$, consider the nodal cubic $C=(y^2=x^3+x^2)$. The curve $C$ is \textit{irreducible} and has a singularity at the origin (a node) that looks similar to the singularity of $H$ at the origin, at least in the analytic topology. For this reason, we call $C$ a normal crossing (nc for short) divisor on $\CC^2$. 

The general definitions are given below:

\begin{defi}
Let $X$ be a smooth complex manifold of dimension $n$ and $D$ a reduced divisor on $X$. We say that $D$ has

$\bullet$ \textit{normal crossings} if $D$ is locally analytically given by the union of coordinate hyperplane, i.e. if for every $p\in D$, there exists a analytic chart $U \ni p$ and coordinates $z_1, \ldots, z_n$ on $U$ such that $D\cap U = (z_1 \cdots z_r=0)$ for some $1 \le r \le n$.

$\bullet$ \textit{simple normal crossings} if $D$ has normal crossings and every irreducible component of $D$ is smooth.

\end{defi}

These two notions are very close with each other, however, having normal crossings is local in the \textit{analytic} topology while having simple normal crossings is local only in the \textit{Zariski} topology, as shown by the example above. However, a nc divisor is a snc divisor in the étale topology:

\begin{prop}
\label{prop:etale}
Given a normal crossing divisor $D$ on $X$ and a point $p\in D$, there exists an étale map $f:U\to X$ such that $p\in f(U)$ and $f^*D$ is a simple normal crossing divisor.
\end{prop}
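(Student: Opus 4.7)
The plan is to use Artin's approximation theorem to transfer the analytic coordinates provided by the normal crossing hypothesis into an algebraic étale neighborhood. After shrinking $X$ to an affine Zariski open neighborhood of $p$, we may assume that $D$ is cut out by a single regular function $g$. Write $R = \mathcal{O}_{X,p}$, let $R^h$ denote its henselization, and let $\hat{R}$ be the completion, which coincides with the completion of the analytic local ring $\mathcal{O}_{X,p}^{\mathrm{an}}$. By hypothesis there exist convergent power series $z_1, \dots, z_n$ forming a regular system of parameters in $\mathcal{O}_{X,p}^{\mathrm{an}}$, together with a unit $u$, such that $g = u \cdot z_1 \cdots z_r$ inside $\hat{R}$.

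We then apply Artin's approximation theorem to the excellent henselian local ring $R^h$ and to the polynomial relation $g - U \cdot Z_1 \cdots Z_r = 0$ in indeterminates $U, Z_1, \dots, Z_r$. Since $(u, z_1, \dots, z_r)$ is a solution in $\hat{R}$, Artin's theorem produces a solution $(\tilde{u}, \tilde{z}_1, \dots, \tilde{z}_r) \in R^h$ congruent to the original modulo any prescribed power $\hat{\mathfrak{m}}^N$. Taking $N = 2$, the element $\tilde{u}$ remains a unit, and, together with arbitrary lifts $\tilde{z}_{r+1}, \dots, \tilde{z}_n \in R^h$ of $z_{r+1}, \dots, z_n$ modulo $\hat{\mathfrak{m}}^2$, the tuple $(\tilde{z}_1, \dots, \tilde{z}_n)$ is a regular system of parameters in $R^h$ — this property being detected in $\mathfrak{m}^h/(\mathfrak{m}^h)^2 \cong \hat{\mathfrak{m}}/\hat{\mathfrak{m}}^2$.

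Finally, because $R^h$ is the filtered colimit of local rings $\mathcal{O}_{U,q}$ over pointed étale neighborhoods $f:(U,q) \to (X,p)$, the elements $\tilde{u}, \tilde{z}_1, \dots, \tilde{z}_n$ and the identity $g = \tilde{u}\,\tilde{z}_1 \cdots \tilde{z}_r$ descend to some such $U$. After further shrinking $U$ around $q$, we may assume that $\tilde{u}$ is a global unit, that the identity holds on all of $U$, and that the morphism $(\tilde{z}_1, \dots, \tilde{z}_n) : U \to \mathbb{A}^n$ is étale (it is étale at $q$ by the regular system of parameters property, and étaleness is an open condition). Then $f^*D$ equals the divisor of $\tilde{z}_1 \cdots \tilde{z}_r$ on $U$, which is simple normal crossing since the $\tilde{z}_i$ serve as étale coordinates.

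The main technical point is setting up Artin's approximation so as to simultaneously preserve invertibility of $u$ and the regular system of parameters property; both conditions are detected modulo $\mathfrak{m}^2$, which is why second order approximation suffices. Everything else is routine bookkeeping for descending data from the henselization to an honest étale neighborhood.
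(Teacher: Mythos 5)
Your proof is correct and takes essentially the same route as the paper: both apply Artin's approximation theorem to second order ($k=2$) to the analytic/formal factorization of the local equation of $D$, observe that congruence modulo $\mathfrak{m}^2$ preserves the property of being part of a regular system of parameters, and then descend from the henselization $\mathcal{O}_{X,\bar{x}}$ (a filtered colimit over étale neighborhoods) to an honest étale map $U \to X$ on which the factorization exhibits $f^*D$ as snc. The only cosmetic difference is that you carry the unit $u$ as an explicit extra unknown in the approximation problem, whereas the paper absorbs it into one of the formal factors $\hat{g}_i$ before approximating.
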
 

Recall that an étale map is a flat and unramified map (i.e. every schematic fiber is finite and reduced), which in our situation is the same as being a local biholomorphism. It is useful to remember that locally all étale morphisms are induced by maps of the form $A \to A[T_1, \ldots, T_N]/(f_1, \ldots, f_N)$ where $\mathrm{Jac}(f_1, \ldots, f_N)$ is a unit of $A[T_1, \ldots, T_N]/(f_1, \ldots, f_N)$.\\

Proposition \ref{prop:etale} is well-known to the experts and is an easy consequence of Artin's approximation theorem \cite[Theorem 1.10]{Artin}, but we are going to give a proof for the convenience of the reader. We first need to recall a few notions about étale local rings and Henselian rings; we refer to \cite{Milne} for further details and proofs.\\

Right now, $X$ is a complex variety (possibly singular), and $x\in X$ denotes a (closed) point. The local ring at $x$ for the étale topology is defined to be
\[\mathcal O_{X, \bar x}=\varinjlim _{(U,u)}\mathcal O_{U,u}\]
where $(U,u)$ varies over all étale maps $U \to X$ sending $u$ to $x$, $U$ being connected and affine. This is a noetherian local ring (whose maximal ideal is given by $\mathfrak m_{X,\bar x}:= \varinjlim \mathfrak m_{U,u}$) which happens to be Henselian, i.e. given $f_1, \ldots, f_r \in \mathcal O_{X, \bar x}[T_1, \ldots, T_r]$, then every common zero $\mathbf{a}_0$ in $\CC^n$ of the $\bar f_i$ (which denotes the image of $f_i$ under the evaluation map induced by $\mathcal O_{X, \bar x} \to \mathcal O_{X, \bar x}/\mathfrak m_{X,\bar x} \simeq \CC$) for which $\mathrm{Jac}(f_1, \ldots, f_r)(\mathbf{a}_0)$ is nonzero lifts to a common zero of the $f_i$ in $\mathcal O_{X, \bar x}^n$. Actually, we can even say much more, as $\mathcal O_{X, \bar x}$ is the Henselianization of $\mathcal O_{X, x}$, that is every local homomorphism $\mathcal O_{X, x} \to B$ with $B$ Henselian factors uniquely into $\mathcal O_{X, x} \to \mathcal O_{X, \bar x} \to B$ where the first map is the canonical one provided by the observation that any Zariski neighborhood is an étale neighborhood. 

Suppose now that $X$ is regular at $x$. Then one can show that $\mathcal O_{X, \bar x}$ only depends on $\mathrm{dim}(X)$, and is isomorphic to $\mathcal O_{\CC^n, \bar 0}$, which equals:
\[\mathcal O_{\CC^n, \bar 0}=\CC [[T_1, \ldots, T_n]] \cap \CC(T_1, \ldots, T_n)^{\rm al}\]
i.e. it consists of the formal series in $n$ variables that are roots of a polynomial with coefficients in $\CC [T_1, \ldots, T_n]$. So it is strictly smaller than the formal local ring of $\CC^n $ at $0$ (which consists of all formal series), but the following particular case of a general approximation theorem due to M. Artin \cite[Theorem 1.10]{Artin} shows the close relationship between these two rings: 

\begin{theo}[Artin's Approximation theorem]
Let $X$ be a complex variety, and $x\in X$ a closed point. Given an arbitrary system of polynomial equations 
$$f(\mathbf{Y})=0, \quad \mathbf Y=(Y_1, \ldots, Y_N)$$
with coefficients in $\mathcal O_{X, \bar x}$, a solution $\hat y = (\hat y_1, \ldots, \hat y_N)$ in the $\mathfrak m_{X,x}$-adic completion $\widehat{\mathcal O}_{X, x}$ and an integer $k$, there exists a solution $y=(y_1, \ldots, y_N)$ in $\mathcal O_{X, \bar x}$ with
$$y_i \equiv \hat y_i  \quad (\mathrm{modulo} \, \,  \mathfrak m_{X,x}^k)$$
\end{theo}

Let us apply this theorem to the very concrete situation given by Proposition \ref{prop:etale}:

\begin{proof}[Proof of Proposition \ref{prop:etale}] We start with a smooth complex variety $X$ and a normal crossing divisor $D$ on $X$. Pick a point $x\in D$; there is a (Zariski) neighborhood of $x$ where $D$ is given by $f=0$ where $f \in \mathcal O_{X,x} \simeq \CC[T_1, \ldots, T_n]_{(0)} \subset \CC(T_1, \ldots, T_n)$. As $D$ has normal crossings, there exists a regular sequence $\hat g_1, \ldots, \hat g_r \in \widehat{\mathcal O}_{X, x} \simeq \CC[[T_1, \ldots, T_n]]$ (these series actually have a positive (poly)radius of convergence) such that $ f= \hat g_1 \cdots \hat g_r$ in  $\widehat{\mathcal O}_{X, x}$.   Therefore the polynomial $Y_1 \cdots Y_r-f \in \mathcal O_{X, x}[Y_1, \ldots, Y_r]$ has a solution $\hat g = (\hat g_1, \ldots, \hat g_r)$ in $\widehat{\mathcal O}_{X, x}$. Using Artin's approximation theorem with $k=2$, we find $g=(g_1, \ldots, g_r)$ in $\mathcal O_{X, \bar x}$ such that 
$g_i \equiv \hat g_i \, (\mathrm{modulo} \, \,  \mathfrak m_{X,x}^2)$. In particular $\mathrm{Jac}(g_1, \ldots, g_r)$ does not vanish at $x$ (or at $0$ via the standard identifications), so that $(g_1, \ldots, g_r)$ is a regular sequence in $\mathcal O_{X, \bar x}$. By the definition of this ring, there exists a morphism $\vp:U \to X$ étale over $x$ sending $u\in U$ to $x$ such that $g_i \in \mathcal O_{U,u}$ for every $i$. In particular, $\vp^*D$ is given by $(g_1 \cdots g_r =0)$ near $u$, and as the $g_i$'s form a regular sequence, $\vp^*D$ is snc near $u$. Up to shrinking $U$ (around $u$) one can assume that the functions $g_i$ are defined on the whole $U$ and that $U\to X$ is étale, which concludes the proof. 
\end{proof}

Informally, we started with a decomposition $f=\hat g_1, \ldots, \hat g_r $ where the $\hat g_i$'s are formal series forming a regular sequence; thanks to Artin's theorem, we can approximate these formal series using series that are roots of a polynomial. Then, up to passing to an étale cover, one can view these series as rational fractions (whose denominator does not belong to $(T_1, \ldots, T_n)$) that still form a regular sequence. \\

Let us give an example to illustrate this. We consider $D=(y^2=x^2(1+x)) \subset \mathbb A^2$ the plane nodal cubic, singular at the origin. We are looking for a square root of $1+x$ in a étale neighborhood of $0$ \--- obviously such a function does not exist in any Zariski neighborhood of $0$. We consider the morphism $\mathcal O_{\mathbb A^2, 0} \to \mathcal O_{\mathbb A^2, 0}[T]/(T^2-(1+x))$. As $1+x\in  \mathcal O_{\mathbb A^2, 0}^{\times}$, $2T$ is invertible in  $\mathcal O_{\mathbb A^2, 0}[T]/(T^2-(1+x))$, its inverse being $\frac 1 2 (1+x)^{-1}T$. 

Therefore the morphism $\Sp \left(  \CC[x,y,T]/(T^2-(1+x))\right) \to \mathbb A^2$ is étale over $0$ (so upstairs, near the points $(x,y,T-1)$ and $(x,y,T+1)$). 
Moreover, the divisor can be written upstairs as $(y-xT)(y+xT)=0$ which has simple normal crossings in a Zariski neighborhood of the two points (indeed, it has two components, and it has normal crossings near these points as the map is a local biholomorphism there). 

\section{Metric with conic singularities along divisors}

\subsection{Notion of conic singularities}

Let us start with the local case. In $\CC^n$, we consider the hypersurfaces $(z_k=0)$ for $1 \le k \le d$ where $d$ is an integer less than or equal to $n$. With each hypersurface $(z_k=0)$, we associate a real number $\beta_k \in (0,1)$; a way to bring together all this information is given by the convenient formalism of $\R$-divisors. Indeed, setting $D:=\sum_{k=1}^d (1-\beta_j) D_k$ where $D_k:=(z_k=0)$, all the data is encoded in $D$ \--- we will soon explain why we chose the coefficients to be $1-\beta_k$ instead of $\beta_k$. The model cone metric associated with this configuration is 
\[\om_{\rm cone}:=\sum_{k=1}^d \frac{1}{|z_k|^{2(1-\beta_k)}}\sqrt{-1}dz_k\wedge d\bar z_k +\sum_{k=d+1}^n \sqrt{-1}dz_k\wedge d\bar z_k \] 
One can actually interpret $\om_{\rm cone}$ as the quotient metric induced by the euclidian metric after gluing together the edges of cones of angle $2\pi \beta_k$. 

In general, given a Kähler manifold $X$ and a divisor $D=\sum (1-\beta_i) D_i$ with coefficients $\beta_i \in (0,1)$ and normal crossing support, one says that a (Kähler) metric $\om$ on the complement $X\smallsetminus \Supp(D)$ has conic singularities along $D$ if for any point $p\in \Supp(D)$, there exists an euclidian neighborhood $U$ of $p$ such that $(U,D_{|U})$ is isomorphic to $(\D^n, \sum_{k=1}^d (1-\beta_j) D_k$ )  ($\D^n$ being the unit polydisk of $\CC^n$) such that $\om$ satisfies
\[C^{-1} \om_{\rm cone} \le \om \le C \om_{\rm cone}\]
on $U$, for some constant $C>0$. \\

Kähler metrics with conic singularities along a divisors have been studied intensively, in particular lately in the context of Kähler-Einstein metrics, i.e. metrics with constant Ricci curvature.

Let us expand a little more on that topic, and choose a pair $(X,D)$ consisting of a compact Kähler manifold $X$ and a divisor $D=\sum (1-\beta_k) D_k$ as above with normal crossing support and coefficients in $(0,1)$. A Kähler-Einstein metric for this pair is a Kähler metric on $X \smallsetminus \Supp(D)$ having constant Ricci curvature and having conic singularities along $D$. It is well known that conic Kähler-Einstein metrics are related to the following type of singular Monge-Ampère equation
\[(\om+\ddc \vp)^n = \frac{e^{\mu \vp}dV}{\prod |s_k|^{2(1-\beta_k)}} \leqno{\rm (MA)}\] 
where $\om$ is a background Kähler metric on $X$, $\mu \in \R$ is a parameter which could be related to the sign of the curvature, $dV$ is some suitable smooth volume form on $X$, and $s_k$ are sections of $\mathcal O(D_k)$ defining the hypersurface $D_k$; finally, $\vp$ is a \textit{bounded} $\om$-psh function.

\noindent
If $dV$ is chosen according to the cohomological positivity properties of $K_X+ D$ (if any), then a solution $\omvp:=\om+\ddc \vp$ of (MA) satisfies \[\Ric \omvp = -\mu \omvp + [D] \leqno {\mathrm{(KE)}}\] where $\Ric \omvp := -\ddc \log \omvp^n$ (it is automatically well-defined as a current). \\

\subsection{Previous results}
\label{sec:prev}
Hence in order to construct Kähler-Einstein conic metrics a first step would be to solve the equation (MA). We remark that it is a priori not clear that a solution of (MA) will have conic singularities along 
$D$ \--- even if by the general theory the function $\varphi$ is smooth outside of the support of the divisor \---. Indeed, the equations (KE) or (MA) only impose the behavior of the \emph{determinant} of the metric $\omvp$ whereas having "conic singularities" is much more precise information about the metric itself. Nevertheless, in the case where $D$ has \textit{simple} normal crossing support, we have the following result:

\begin{theo}[\cite{GP}]
\label{thm:gp}
Let $\om$ be a Kähler metric on $X$, $dV$ a smooth volume form, $\mu \in \R$. If $D$ has simple normal crossing support, then any weak solution $\omvp=\om + \ddc \vp$ with $\vp \in L^{\infty}(X)$ of 
\[(\om+\ddc \vp)^n = \frac{e^{\mu \vp}dV}{\prod |s_k|^{2(1-\beta_k)}} \] 
has conic singularities along $D$. \\
\end{theo}
\noindent

The assumption about the divisor is crucial to produce a \textit{global} model (possibly regularized) of a Kähler metric with conic singularities along $D$. Roughly speaking, suppose that $D$ has one component, and let $s$ be a section of $D$. Then up to scaling the hermitian metric $|\cdotp|$ on $\mathcal O_X(D)$, the current $\om+ \ddc |s|^{2\beta}$ is a conic metric along $D$ as long as $D$ has simple normal crossing support because  $\om+ \ddc |z_1|^{2\beta}$ is a conic metric along $(1-\beta) [z_1=0]$ . However, if $D$ intersects itself, then locally $s$ can be written as $z_1z_2$ and it can be checked easily that $\om+ \ddc |z_1z_2|^{2\beta}$ does not have conic singularities along $(1-\beta)[z_1=0]+(1-\beta)[z_2=0)$.\\

The main goal of this paper is to show that the same result holds even when $D$ has merely normal crossing support, and we are going to rely on the singular version of Theorem \ref{thm:gp} proved in \cite[Theorem 6.2]{GP}. The following theorem deals with klt pairs as introduced in the (log) Minimal Model Program, and we refer to \cite{G2} for a detailed account of the notions involved. So, given a klt pair $(X,D)$, there is a notion of a Kähler-Einstein metric for the pair; this is a current on $X$ that turns out to be smooth and Kähler-Einstein on $\xreg \smallsetminus \Supp(D)$. In \cite{G2} and then \cite{GP}, it was proved that any such current has conic singularities on the Zariski open set $\xdr:= \{x\in X; (X,D)$ is log smooth at $x\}$:
\begin{theo}[\cite{GP}]
\label{thm:klt}
Let $(X,D)$ be a projective klt pair.
\begin{enumerate}
\item[$(i)$] If $K_X+D$ is ample, then the Kähler-Einstein metric of $(X,D)$ has cone singularities along $D$ on $\xdr$. 
\item[$(ii)$] If $K_X+D$ is numerically trivial and $\alpha$ is a Kähler class, then the Kähler-Einstein metric of $(X,D)$ living in $\alpha$ has cone singularities along $D$ on $\xdr$. 
\item[$(ii)$] If $-(K_X+D)$ is ample, then any Kähler-Einstein metric for $(X,D)$ has cone singularities along $D$ on $\xdr$.\\
\end{enumerate}
\end{theo}

\section{Proof of the Main Theorem}

Recall that we want to prove the following result:

\begin{theo}
Let $X$ be a projective manifold and $D=\sum (1-\beta_i) D_i$ a divisor with coefficients $\beta_i \in (0,1)$ and normal crossing support. Then any Kähler-Einstein metric for $(X,D)$ has conic singularities along $D$.  
\end{theo}

Recall that a Kähler-Einstein metric for $(X,D)$ is a closed positive current $\om$ living in a Kähler cohomology class which is smooth on $X\smallsetminus \Supp(D)$, had bounded local potentials, and satisfies:
\[\Ric \om = \mu \om + [D]\]
for some real number $\mu$, and where $\Ric \om := - \ddc \log \om^n$ as long as $\log \om^n \in L^{1}_{\rm loc}$.\\

\begin{proof}
We already know the result on the simple normal crossing locus of $(X,D)$, so we are going to work in near a point $x$ where the divisor does not have simple normal crossings. 

By Proposition \ref{prop:etale}, there exists a Zariski neighborhood $V$ of $x$ and an étale map $g:U\to V$ such that:

$\bullet$ $g(u)=x$ for some $u\in U$ 

$\bullet$ $g^*D$ has simple normal crossing support on $U$.\\

We consider a compactification $\overline U$ of $U$; the map $g:U \to X$ induces a rational map $ g_{\rm rat} : \overline U \dashedrightarrow X$. If we denote by $\bar X$ a desingularization of the closure of the graph of $g_{\rm rat}$, then $\overline X$ is equipped with a proper map $\bar g : \overline X \to X$ that coincides with $g$ on $U$. 
Let us now consider the Stein factorization of $\bar g$:
$$  \xymatrix{
    \overline X\ar[d]_{\bar g} \ar[r]^h  & Y \ar[ld]^f  \\
     X & 
  }$$
where $Y:= \mathbf{Spec} \,  \bar g_* \mathcal O_{\bar X} $; here $\bar g$ automatically factors through a map $h: \overline X \to Y$ satisfying $h_{*}\mathcal O_{\bar X} = \mathcal O_Y$ and we denoted by $f:Y\to X$ the canonical map coming along with $Y$. It is finite since $ \bar g_* \mathcal O_{\bar X}$ is a coherent $\mathcal O_X$-module.

\noindent
In the following lemma, we gathered all the information we need in order to complete the proof of the Main Theorem:
\begin{lemm}
\label{lem:end}
The following assertions are satisfied:
\begin{enumerate}
\item[$a.$] $h$ induces an isomorphism over $U$,
\item[$b.$] $Y$ is normal, and the pair $(Y,\Delta)$ is klt (here $\Delta:=f^*D-K_{X/Y}$),
\item[$c.$] $f^*\om$ is a Kähler-Einstein metric for $(Y,\Delta)$.\\
\end{enumerate}
\end{lemm}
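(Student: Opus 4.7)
My plan is to prove parts $(a)$, $(b)$, $(c)$ in sequence. For $(a)$, the key geometric input is that by construction of $\bar X$ as a desingularization of the graph closure of $g_{\mathrm{rat}}$, the open set $U$ embeds in $\bar X$ so that $\bar g|_U = g$ is étale, hence has discrete fibers. The Stein factorization $h$ contracts connected components of fibers of $\bar g$ to points; since any $u \in U$ is isolated in its fiber $\bar g^{-1}(\bar g(u))$, the connected component of that fiber through $u$ is just $\{u\}$, and so $h^{-1}(h(u)) = \{u\}$. Thus $h^{-1}(h(U)) = U$, which together with properness of $h$ shows that $h(U)$ is open in $Y$. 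I would then invoke Zariski's Main Theorem (or, equivalently, that a proper, birational, quasi-finite morphism to a normal variety is an isomorphism) locally near each point of $U$ to conclude that $h : U \xrightarrow{\sim} h(U)$ is an isomorphism.

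For $(b)$, normality of $Y$ follows from the standard fact that since $\bar X$ is smooth, $\mathcal O_Y = h_\ast \mathcal O_{\bar X}$ is integrally closed in its total ring of fractions. For the klt property, I would verify the identity $K_Y + \Delta = f^\ast(K_X + D)$ using that the relative canonical $K_{Y/X}$ coincides with the ramification divisor of the finite map $f$ between normal varieties; this in particular makes $K_Y + \Delta$ automatically $\R$-Cartier. Since $(X,D)$ is log smooth with coefficients $1-\beta_k \in (0,1)$, it is klt, and the klt condition is preserved when pulled back along a finite surjective morphism between normal varieties satisfying $K_Y + \Delta = f^\ast(K_X + D)$ (a standard result, see e.g.\ Kollár--Mori, Proposition 5.20). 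This gives that $(Y,\Delta)$ is klt.

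For $(c)$, I would pull back the Kähler-Einstein equation $\Ric \om = \mu \om + [D]$ by $f$. The identity $\log (f^\ast \om)^n = f^\ast \log \om^n + \log |\mathrm{Jac}(f)|^2$ on the regular locus of $f$ yields the transformation rule $\Ric f^\ast \om = f^\ast \Ric \om - [K_{Y/X}]$ as currents on $Y$, whence $\Ric f^\ast \om = \mu f^\ast \om + [f^\ast D - K_{Y/X}] = \mu f^\ast \om + [\Delta]$. Local boundedness of potentials transfers because $f$ is finite, so $f^\ast \om$ qualifies as a weak Kähler-Einstein metric for $(Y,\Delta)$. I expect the main obstacle to lie in $(a)$: one must carefully rule out the possibility that $h$ identifies a point of $U$ with a positive-dimensional component of $\bar X \setminus U$ sitting in the same fiber of $\bar g$, and this is precisely what the combination of properness of $h$ with the discreteness of the $\bar g|_U$-fibers guarantees. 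Parts $(b)$ and $(c)$ are then routine applications of standard klt-pullback theory and the Ricci transformation formula under finite morphisms.
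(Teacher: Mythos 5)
Your proposal is correct and follows essentially the same route as the paper: part $a.$ via Stein factorization and Zariski's Main Theorem combined with quasi-finiteness of $\bar g$ on $U$ (the paper argues by contradiction, producing a curve in $U$ contracted by $\bar g$, while you use isolatedness of points of $U$ in their $\bar g$-fibers \--- the same idea, and your appeal to normality of $Y$ is harmless since the paper's proof of normality in $b.$ is independent of $a.$), part $b.$ via normality of $h_*\mathcal O_{\bar X}$ and the standard klt finite-pullback result (the paper cites Kollár's Proposition 3.16, the analogue of your Kollár--Mori 5.20), and part $c.$ via the Ricci transformation rule under a finite map. The one point where the paper is more careful than your phrasing ``as currents on $Y$'': the identity $\Ric f^*\om = \mu f^*\om + [\Delta]$ is asserted only on $Y_{\rm reg}$ (it does not literally make sense across $Y_{\rm sing}$), and the paper then invokes the characterization of weak Kähler--Einstein metrics of BBEGZ (Prop.\ 3.8), whose second condition \--- full Monge--Ampère mass, $\int_{Y_{\rm reg}}(f^*\om)^n = \{f^*\om\}^n$ \--- is exactly what your bounded-potentials remark supplies.
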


\noindent
If we prove Lemma \ref{lem:end} above, we will be done with the proof of the Theorem. Indeed, by $a.$, $f^*D$ is snc on $W:=h(U)$ and $f$ is étale over $W$. As $f^*\om$ is a Kähler-Einstein metric for the klt pair $(Y, \Delta)$, Theorem \ref{thm:klt} guarantees that $f^*\om$ has conic singularities along $\Delta$ on $W$ (where $\Delta$ coincides with $f^*D$). As $f$ is étale on $W$, it is, in particular, a local biholomorphism, and therefore $\om$ has conic singularities along $D$ on $f(W)=g(U)=V$. 
\end{proof}

We are now left to prove the lemma above:

\begin{proof}[Proof of Lemma \ref{lem:end}]
Let us take things in order. 

$a.$ By Zariski's Main Theorem, $h:\overline X \to Y$ has connected fibers, and therefore so has $h_{|U}:U \to Y$ as $U$ is connected. If $h_{|U}$ is not an isomorphism onto its image, it means that there exists $y\in h(U)$ such that $U \cap h^{-1}(y)$ has positive dimension, and therefore we can find at least a curve $C \subset U$ such that $\bar g(C)=f(h(C))= pt$. This contradicts the fact that $\bar g$ is étale over $U$.    \\

$b.$ Recall that $Y$ is obtained by gluing affine schemes of the form $\Sp (H^0(\Sp \, A, \bar g_* \mathcal O_{\bar X}))$ where $\Sp \, A$ is an affine open subscheme of $X$. 
Now $H^0(\Sp \, A, \bar g_* \mathcal O_{\bar X})=H^0(\bar g^{-1}(\Sp \, A), \mathcal O_{\bar X})$ is an integrally closed ring as $\bar X$ is normal. Indeed, we claim that for any normal variety $Z$, the ring $B:=\mathcal O_Z(Z)$ is normal. First, it is obviously integral. Now, choose a monic polynomial $P$ with coefficients in $B$ and $f\in \mathrm {Frac} \, B$ satisfying $P(f)=0$. We cover $Z$ by affine varieties $\Sp \, A_i$; the restriction map induces injections $B \hookrightarrow A_i$. So we can view $f \in \mathrm{Frac} \, A_i$ for all $i$, and by normality of $A_i$, we find that $f \in \bigcap A_i$, therefore $f\in B$ and $B$ is integrally closed.  

\noindent
Another way to obtain normality of $Y$ is the following: let $\nu: \widetilde Y \to Y$ be the normalization map; by the universal property of $\nu$ (as $\overline X$ is normal) the map $h$ factors through $\nu$, i.e. $h=\nu \circ \tilde h$ for some $\tilde h:\overline X \to \widetilde Y$. The map $\tilde h $ induces an injection $\O_{\tilde Y} \hookrightarrow \tilde h_* \O_{\bar X}$; as $\nu$ is left exact, we get  $$\nu_*\O_{\tilde Y} \longhookrightarrow h_*\O_{\bar X} \overset{\sim}{\longrightarrow} \O_Y$$
that we can precompose with the natural injection $\O_{Y} \hookrightarrow \nu_*\O_{\tilde Y}$ to see that we actually have an isomorphism $\O_{Y} \simeq \nu_*\O_{\tilde Y}$. Therefore the conductor of $\nu$ is trivial and $\nu$ is an isomorphism so that $Y$ is normal.

\noindent
Now, writing $\Delta:=f^*D-K_{Y/X}$ where $K_{Y/X}:=K_Y-f^*K_X$ is the ramification divisor of $f$, we have the formula $K_Y+\Delta=f^*(K_X+D)$, and by \cite[Proposition 3.16]{Kpairs}, the pair $(Y,\Delta)$ is klt as $(X,D)$ is \--- recall that having klt singularities is local in the analytic topology, and from that perspective, $(X,D)$ is analytically log smooth even it isn't in the algebraic sense. \\

$c.$ To lighten notation, let us write $\bom := f^*\om$. This is a positive current on $Y$ with bounded potentials, so in particular its Monge-Ampère $\bom^n$ is well-defined as a non-pluripolar measure. To see that $\bom$ is a Kähler-Einstein metric for $(Y, \Delta)$, we can use the convenient characterization given in \cite[Proposition 3.8]{BBEGZ} (cf \cite[Proposition 3.3]{BG} for an analogue in the negative curvature case, the zero curvature case being very similar) that breaks down the problem to show that the following two conditions are satisfied:
\begin{enumerate}
\item[$\cdot$] $\Ric \bom = \mu \bom + [\Delta]$ on $Y_{\rm reg}$,
\item[$\cdot$] $\int_{Y_{\rm reg}} \bom^n = \{\bom\}^n$.\\
\end{enumerate}

\noindent
The second point is straightforward as $\bom$ has bounded potentials so that its volume on any Zariski open set computes the top intersection of its cohomology class. As for the first point, we first work locally near a point $y \in Y_{\rm reg}$. Using holomorphic coordinates $(z_i)$ (resp $(w_i)$) on $X$ (resp. $Y$) centered at $f(y)$ (resp. $y$), one can write $\om^n = e^h dz_1 \wedge d \bar z_1 \wedge \cdots \wedge dz_n \wedge d \bar z_n$ for some function $h\in L^{1}_{\rm loc}$. Pulling this back to $Y$ by $f$, we find:
$$\bom^n = e^{f^*h} \, |\mathrm{Jac}(f)|^2 \, dw_1 \wedge d \bar w_1 \wedge \cdots \wedge dw_n \wedge d \bar w_n $$
where $\mathrm{Jac}(f)=\Lambda^n df:f^*K_X \to K_Y$ can be viewed as a section of $K_{Y/X}$. Therefore, the identity:
$$\Ric \bom = f^* \Ric \om - [K_{Y/X}]$$
is valid on $Y_{\rm reg}$. As $\om$ is a Kähler-Einstein metric for $(X,D)$ and $\Delta=f^*D- K_{Y/X}$, we end up with:
$$\Ric \bom = \mu \bom + [\Delta]$$
on $Y_{\rm reg}$, which had to be proved. 
\end{proof}

\section{The example of the nodal cubic}
\label{sec:nodal}
Let us give a comprehensive example in the case of the nodal cubic $C=(y^2z=x^2(x+z)) \subset \P^2$. The curve $C$ is a member of 
$|\mathcal O_{\P^2}(-3)|$; for any $\beta \in (0,1)$, the pair $(\P^2,(1-\beta)C)$ is klt and we know from \cite{rber, BBEGZ} that for $\beta$ small enough, there exists a unique Kähler-Einstein metric $\omb$ satisfying $\Ric \omb = \beta \omb + (1-\beta) [C]$. We know from \cite{G2} that this metric has conic singularities outside of the node, but in order to understand what happens at the node, we would for instance need as before a finite map $\pi:S\to \P^2$ from a smooth surface $S$ to $\P^2$ such that there exists $x\in S$ with $\pi(x)=[0:0:1]$ satisfying:

$\bullet$ $\pi$ induces a biholomorphism between a neighborhood of $x$ and a neighborhood of $[0:0:1]$;

$\bullet$ As a divisor, $\pi^*C$ has simple normal crossing support near $x$.\\
Once such a morphism is found, one can argue using \cite{G2} in the same way as in the previous section. \\

We claim that the following morphism suits the requirements:
\[
\begin{array}{cccc}
\pi:& \mathbb{P}^2  & \longrightarrow & \mathbb{P}^2 \\
 & [x:y:z] & \mapsto & [y(x^2-y^2)\, : \,x(x^2-y^2)+zx^2 \, : \, y^3+z^3]
\end{array}
\]
First, it is easy to check that $\pi$ is a well-defined degree $9$ morphism, and the preimages of the singular point are given by: 
$\pi^{-1}([0:0:1])= \{[1:1:0], [1:-1:0], [0:0:1], [1:0:-1]\}$. 
As for the pull-back of the nodal curve, it can be decomposed as
\[\pi^{*}C=H+E\]
where $H=(z=0)$ and $E$ is the reduced divisor defined by $f(x,y,z)=0$ where
 \[f(x,y,z)=z^4x^4+z^2(x^2-y^2)^3+zx^4y^3+2x^3y^3(x^2-y^2)\]
 
 \vspace{3mm}
 How can one find such a morphism? One starts with the normalization of the affine cubic $\mathbb A^1 \to \mathbb A^2, x\mapsto (x^2-1, x(x^2-1))$ that we can compactify as the normalization of $C$: 
 \[
 \begin{array}{ccc}
 \P^1 &\longrightarrow & \mathbb{P}^2 \\
 
[x:y] & \mapsto & [y(x^2-y^2)\, : \, x(x^2-y^2) \, : \, y^3]
\end{array}
\]
Then, one tries to extends this morphism to a finite endomorphism of $\P^2$. So, this amounts to adding a polynomial divisible by $z$ to each component of $\pi_0$ and making sure that this gives a well-defined morphism. By construction, the pull-back of $C$ will contain $\P^1 \subset \P^2$. \\

We now aim to prove that $\pi$ satisfies the two conditions stated in the previous bullet points. We choose to work near the point $P=[1:1:0]$. Therefore, one can use the chart $x=1$ at the source, and the chart $z=1$ at the target. \\

$\bullet$ Let us show that $\pi$ is a local biholomorphism near $P$. In the above chosen coordinates, $\pi$ can be written as:
\[(y,z) \mapsto \left( \frac{y(1-y^2)}{y^3+z^3}, \frac{1-y^2+z}{y^3+z^3}\right) \]
so that 
\[ d\pi =\frac{1}{(y^3+z^3)^2} \begin{pmatrix}
(1-3y^3)(y^3+z^3)-3y^3(1-y^2) & -2y(y^3+z^3)-3y^2(1-y^2+z) \\
-3z^2y(1-y^2) & (y^3+z^3)-3z^2(1-y^2+z)\\
\end{pmatrix}\]
hence
\[ d\pi(1,0) = \begin{pmatrix}
-2 & -2 \\
0 & 1
\end{pmatrix}\]
 which is invertible, so that the claim is proved. \\
 
 $\bullet$ The global equation of $\pi^*C$ is $zf(x,y,z)=0$. We are going to show that $E=(f=0)$ is smooth at $P$ and that it meets $H=(z=0)$ transversely at this point. This will show that $P$ belongs to the snc locus of $H+E$. 
 
\noindent 
So we set $g(y,z):=f(1,y,z)$, and we compute: 
 \[\nabla g(y,z) = \begin{pmatrix}
 -6z^2y(1-y^2)^2+3zy^2+6y^2-10y^4\\
 4z^3+2(1-y^2)^3z+y^3
 \end{pmatrix}\]
which, at $(y,z)=(1,0)$ becomes: 
  \[\nabla g(1,0) = \begin{pmatrix}
-4\\
1
 \end{pmatrix}\]
Therefore, $E$ is smooth at $P$; moreover, $\nabla z = (0,1)$ is not collinear with $\nabla g(1,0)$. Thus, $H$ and $E$ meet transversely at this point, and the holomorphic implicit function theorem guarantees that $H+E$ has normal crossings at $P$. 
\backmatter

\bibliographystyle{smfalpha}
\bibliography{biblio}

\newcommand{\etalchar}[1]{$^{#1}$}
\providecommand{\bysame}{\leavevmode ---\ }
\providecommand{\og}{``}
\providecommand{\fg}{''}
\providecommand{\smfandname}{\&}
\providecommand{\smfedsname}{\'eds.}
\providecommand{\smfedname}{\'ed.}
\providecommand{\smfmastersthesisname}{M\'emoire}
\providecommand{\smfphdthesisname}{Th\`ese}
\begin{thebibliography}{BBE{\etalchar{+}}11}

\bibitem[Art69]{Artin}
{\scshape M.~Artin} -- {\og Algebraic approximation of structures over complete
  local rings\fg}, \emph{Inst. Hautes \'Etudes Sci. Publ. Math.} (1969),
  no.~36, p.~23--58.

\bibitem[BBE{\etalchar{+}}11]{BBEGZ}
{\scshape R.~J. Berman, S.~Boucksom, P.~Eyssidieux, V.~Guedj {\normalfont
  \smfandname} A.~Zeriahi} -- {\og {K{\"a}hler-Einstein metrics and the
  K{\"a}hler-Ricci flow on log-Fano varieties}\fg}, \emph{arXiv:1111.7158v2}
  (2011).

\bibitem[{Ber}13]{rber}
{\scshape R.~J. {Berman}} -- {\og {A thermodynamical formalism for
  Monge-Amp\`ere equations, Moser-Trudinger inequalities and K\"ahler-Einstein
  metrics.}\fg}, \emph{{Adv. Math.}} \textbf{248} (2013), p.~1254--1297.

\bibitem[BG14]{BG}
{\scshape R.~J. Berman {\normalfont \smfandname} H.~Guenancia} -- {\og
  {K{\"a}hler-Einstein metrics on stable varieties and log canonical
  pairs}\fg}, \emph{Geometric and Functional Analysis} \textbf{24} (2014),
  no.~6, p.~1683--1730.

\bibitem[Bre13]{Brendle}
{\scshape S.~Brendle} -- {\og {Ricci flat K{\"a}hler metrics with edge
  singularities }\fg}, \emph{International Mathematics Research Notices}
  \textbf{24} (2013), p.~5727--5766.

\bibitem[CGP13]{CGP}
{\scshape F.~Campana, H.~Guenancia {\normalfont \smfandname} M.~P\u{a}un} --
  {\og {Metrics with cone singularities along normal crossing divisors and
  holomorphic tensor fields}\fg}, \emph{Ann. Scient. Éc. Norm. Sup.}
  \textbf{46} (2013), p.~879--916.

\bibitem[Don12]{Don}
{\scshape S.~K. Donaldson} -- {\og K{\"a}hler metrics with cone singularities
  along a divisor\fg}, in \emph{Essays in mathematics and its applications},
  Springer, Heidelberg, 2012, p.~49--79.

\bibitem[GP13]{GP}
{\scshape H.~Guenancia {\normalfont \smfandname} M.~P{\u{a}}un} -- {\og {Conic
  singularities metrics with prescribed Ricci curvature: the case of general
  cone angles along normal crossing divisors}\fg}, \emph{arXiv:1307.6375}
  (2013).

\bibitem[Gue13]{G2}
{\scshape H.~Guenancia} -- {\og {K{\"a}hler-Einstein metrics with cone
  singularities on klt pairs}\fg}, \emph{Int. J. Math.} \textbf{24} (2013).

\bibitem[JMR11]{JMR}
{\scshape T.~Jeffres, R.~Mazzeo {\normalfont \smfandname} Y.~Rubinstein} --
  {\og {K{\"a}hler-Einstein metrics with edge singularities}\fg},
  \emph{arXiv:1105.5216} (2011), with an appendix by C. Li and Y. Rubinstein.

\bibitem[Kol97]{Kpairs}
{\scshape J.~Koll{\'a}r} -- {\og Singularities of pairs\fg}, in \emph{Algebraic
  geometry---{S}anta {C}ruz 1995}, Proc. Sympos. Pure Math., vol.~62, Amer.
  Math. Soc., Providence, RI, 1997, p.~221--287.

\bibitem[Ko{\l}98]{Kolo}
{\scshape S.~Ko{\l}odziej} -- {\og {The complex Monge-Amp{\`e}re operator}\fg},
  \emph{Acta Math.} \textbf{180} (1998), no.~1, p.~69--117.

\bibitem[Mil80]{Milne}
{\scshape J.~S. Milne} -- \emph{\'{E}tale cohomology}, Princeton Mathematical
  Series, vol.~33, Princeton University Press, Princeton, N.J., 1980.

\end{thebibliography}

\end{document}